    \theoremstyle{plain}
        \newtheorem{theorem}{Theorem}
        \newtheorem{corollary}[theorem]{Corollary}
        \newtheorem{lemma}[theorem]{Lemma}
        \newtheorem{observation}[theorem]{Observation}
    \theoremstyle{definition}
    \theoremstyle{remark}
        \newtheorem*{remark}{Remark}
\colorlet{R1Color}{black}
\colorlet{R2Color}{black}
\colorlet{OurColor}{black}
\renewcommand{\sout}[1]{\unskip} %to hide the striked out parts
\begin{document}
\title{Face-hitting Dominating Sets in Planar Graphs}
%
%\titlerunning{Abbreviated paper title}
% If the paper title is too long for the running head, you can set
% an abbreviated paper title here

\author{P. Francis}

\affil{Department of Mathematics\\
    SAS, VIT-AP University, Amaravati, 
    Andhra Pradesh, India.\\
    francis@vitap.ac.in}

\author{Abraham M. Illickan}
    
\affil{Department of Computer Science\\
    University of California, Irvine.\\
    aillicka@uci.edu}
   \author{Lijo M. Jose}
   \author{Deepak Rajendraprasad}
\affil{  Department of Computer Science\\
    Indian Institute of Technology Palakkad\\
    112004005@smail.iitpkd.ac.in,
    deepak@iitpkd.ac.in}

\maketitle

\begin{abstract}
    A \emph{dominating set} of a graph $G$ is a subset $S$ of its vertices such that each vertex of $G$ not in $S$ has a neighbor in $S$. A \emph{face-hitting set} of a plane graph $G$ is a set $T$ of vertices in $G$ such that every face of $G$ contains at least one vertex of $T$. We show that the vertex-set of every plane (multi-)graph without isolated vertices, self-loops or $2$-faces can be partitioned into two disjoint sets so that both the sets are dominating and face-hitting. We also show that all the three assumptions above are necessary for the conclusion.

    As a corollary, we show that every $n$-vertex simple plane triangulation has a dominating set of size at most $(1 - \alpha)n/2$, where $\alpha n$ is the maximum size of an independent set in the triangulation.     Matheson and Tarjan [European J. Combin., 1996] conjectured that every plane triangulation with a sufficiently large number of vertices $n$ has a dominating set of size at most $n / 4$. Currently, the best known general bound for this is by Christiansen, Rotenberg and Rutschmann \cite{christiansen2024triangulations} [SODA, 2024] who showed that  every plane triangulation on $n > 10$ vertices has a dominating set of size at most $2n/7$. Our corollary improves their bound for $n$-vertex plane triangulations which contain a maximal independent set of size either less than $2n/7$ or more than $3n/7$. 

\paragraph{Keywords:}Domination number, Dominating sets, Face-hitting sets, Planar graphs, Planar triangulations.
\end{abstract}

\section{Introduction}
All graphs considered in this paper are finite and undirected. A \emph{dominating set} of a graph $G$ is a subset $S$ of its vertices such that each vertex of $G$ not in $S$ has a neighbor in $S$. The \emph{domination number} $\gamma(G)$ of $G$ is the cardinality of a smallest dominating set in $G$. A $k$-coloring $f:V(G) \to [k]$ of $G$ is called \emph{domatic} if each color-class of $f$ is a dominating set of $G$. A \emph{face-hitting set} of a plane graph $G$ is a set of vertices in $G$ that meets every face of $G$. 
Let $\beta(G)$ denote the size of a smallest face-hitting set in $G$. A $k$-coloring $f:V(G) \to [k]$ is called \emph{polychromatic} if each color-class of $f$ is a face-hitting set of $G$. The polychromatic number $p(G)$ of $G$ is the largest number $k$ such that there is a polychromatic $k$-coloring of $G$.  

Let $f$ be a face in a plane graph $G$. If $G$ is not connected, the boundary of $f$ could be a disjoint union of closed walks. The \emph{length} of $f$ is the total of length of all these walks. A $k$-face (resp. $k^+$-face, $k^-$-face) is a face of length $k$ (resp. at least $k$, at most $k$). The \emph{degree} of a vertex is the number of distinct vertices adjacent to it. The main result of this paper is that every plane graph without isolated vertices, self-loops or $2$-faces has a $2$-coloring which is simultaneously domatic and polychromatic. 

\begin{theorem}                                     \label{thm:General}
    % Every plane graph $G$ with minimum degree at least one, minimum face-length at least three and no self-loops 
    Every plane graph  $G$ without isolated vertices, self-loops or $2$-faces, has two disjoint subsets $V_1, V_2 \subseteq V(G)$, such that both $V_1$ and $V_2$ are dominating and face-hitting.
\end{theorem}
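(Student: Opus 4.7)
The plan is to prove Theorem~\ref{thm:General} by induction on $|V(G)|+|E(G)|$. First, observe that we may assume $V_1\cup V_2=V(G)$: any vertex outside their union can be placed into either set without affecting the domination or face-hitting property of the other. So the goal becomes producing a $2$-coloring $c\colon V(G)\to\{1,2\}$ under which (a) every face of $G$ contains vertices of both colors on its boundary and (b) every vertex of $G$ has at least one neighbor of the opposite color.

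Turning to base cases, the simplest situations are when $G$ is a forest or a single triangle. In the forest case, the hypotheses force every component to have at least two vertices and the outer face to have length at least four (a single edge would give a $2$-face, but total edge count $\geq 2$ forces length $\geq 4$); the two bipartition classes of $G$ then serve as $V_1, V_2$, both non-empty, both dominating (a bipartite graph with no isolated vertex has each side dominating the other), and both hitting the unique outer face since every vertex of a tree lies on that face. The triangle $K_3$ is handled by any $1$-$2$ split of its vertices. For the inductive step, I would look for a reducible local configuration. A clean candidate in the $2$-edge-connected case is the removal of a single non-bridge edge $e$: the two faces incident to $e$, each of length $\geq 3$, merge into a face of length $\geq 4$, so the reduced graph $G-e$ still has no $2$-face; its minimum degree remains $\geq 1$ because both endpoints of $e$ had degree $\geq 2$ in $G$. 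Induction then yields a valid $2$-colouring $c'$ of $G-e$, which is automatically dominating for $G$ (adding $e$ back only adds edges). The remaining task is to verify that the restored faces each receive both colours.

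The main obstacle, and the heart of the proof, is precisely this verification. When the deleted edge $e=uv$ is restored, the merged face of $G-e$ (bichromatic by induction) is split into two faces of $G$, and it is not automatic that each inherits both colours. This fails exactly when $c'(u)=c'(v)$ and the vertices of the opposite colour on the merged face happen to all lie on one side of $e$. Dealing with this requires either a very careful choice of the deleted edge (so that its endpoints are guaranteed to be bichromatic in the induced colouring) or enough remaining flexibility in that colouring to perform a local flip which restores the face-hitting property. The case of bridge edges and cut vertices adds a further layer: there one would use a block decomposition and splice together colourings on the pieces, matching colours at cut vertices. I expect the authors' proof to combine a structural analysis (pinning down the shape of a minimal counterexample, perhaps via Euler's formula and a short-face or low-degree vertex argument) with a flip/splice argument along these lines; the interface between the inductive reduction and the extension is where the essential work lies.
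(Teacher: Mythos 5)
Your proposal correctly reformulates the theorem as finding a single $2$-coloring that is simultaneously domatic and polychromatic, and your base cases are fine, but the inductive step has a genuine gap that you yourself identify and then do not close. After deleting a non-bridge edge $e=uv$ and invoking induction on $G-e$, you must argue that both faces of $G$ incident to $e$ are bichromatic, and you only offer two unsubstantiated escape routes. The first, ``a very careful choice of the deleted edge,'' is circular: the inductive coloring of $G-e$ is produced only \emph{after} $e$ is chosen, so you cannot select $e$ based on properties of a coloring you do not yet have, and the induction hypothesis gives you no control over which coloring comes back. The second, ``a local flip,'' is not local at all: recoloring $u$ (say) can destroy the face-hitting property of every other face incident to $u$ and can destroy domination for any neighbor of $u$ whose only opposite-colored neighbor was $u$; there is no argument that such a cascade terminates or can be avoided. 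The bridge/cut-vertex case is likewise deferred to an unspecified splicing argument. Since the statement you are proving carries no extra inductive strength (no flexibility or ``list of valid colorings'' is propagated), the induction as set up does not close, and this is precisely where the essential work lies.

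The paper's actual proof avoids induction entirely. It first reduces to connected graphs of minimum degree at least two (pendant vertices are trimmed and added back with the color opposite to their parent), and then augments $G$ with ``dummy'' chords inside faces so that, in the augmented plane multigraph $G'$, every $3^+$-face of $G$ spans a triangle and all but at most one vertex lie in a triangle whose two incident edges are true edges of $G$; an extremal choice of $G'$ (maximizing happy vertices and faces, then minimizing dummy edges) together with a discharging-style ``shift the unhappiness along a path'' argument yields the single-exceptional-vertex claim. A proper $4$-coloring of $G'$ (Four Color Theorem) is then collapsed to $2$ colors by pairing color classes; any triangle receives three distinct colors and hence both merged colors, which gives domination at every happy vertex and hitting of every face in one stroke. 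If you want to salvage an edge-deletion induction, you would need to strengthen the inductive statement to carry enough recoloring freedom across the reduction; as written, the proposal is a plausible plan with its central step missing.
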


The assumptions made on $G$ in the above theorem are indeed necessary. It follows from Theorem~\ref{thm:General} that every $n$-vertex plane graph $G$ satisfying the premise of the theorem has a subset $S$ of vertices of cardinality at most $n/2$ which is both dominating and face-hitting. It is easy to see that, if we allow isolated vertices or length-$1$ faces, then $\gamma(G)$ can be as large as $n$. If we allow $2$-faces, then $\beta(G)$ can be as large as $3n/4$. For the second observation, consider $G$ to be a disjoint union of $n/4$ components, each component being a $K_4$ in which every edge is replaced with a $2$-face. If we allow self-loops, then there can be a $3^+$-face which has only one vertex in its boundary. For example consider a vertex $v$ with three self loops $l_1, l_2, l_3$ such that $l_1$ and $l_2$ have disjoint interiors, but both are inside of $l_3$. Then the face bounded by $l_1, l_2, l_3$ is such an example. Such a face can be present even if we assume that minimum face-length of the graph is $3$. We can add a neighbor of $v$ inside each of the $1$-faces incident to $v$ so that all the faces have length at least $3$. It is clear that we cannot have a polychromatic $2$-coloring in this case. 

We can also see that the bound of $n/2$ is tight under these assumptions. A disjoint collection of edges (or $3$-length paths, or $4$-length cycles) have $\gamma(G) \geq |V(G)|/2$. Bose et al.~\cite{bose1997guarding} showed that there exists an infinite family of simple connected plane graphs with $\beta(G) \geq \lfloor |V(G)|/2 \rfloor$.

Theorem~\ref{thm:General} has an interesting application. A \emph{near-triangulation} is a simple planar graph embedded in the plane such that all its faces except possibly the outer one are bounded by three edges. A \emph{triangulation} is a near-triangulation in which the outer face is also bounded by three edges. If we remove an independent set from a triangulation $G$ and then apply Theorem~\ref{thm:General} on the resultant graph $G'$, we can show that the dominating and face-hitting sets $V_1,V_2$ of $G'$ will both be dominating sets of $G$. This observation helps us make some progress towards a conjecture by Matheson and Tarjan. Matheson and Tarjan \cite{MathTar} in 1996 proved that every $n$-vertex plane triangulation $G$ has a domatic $3$-coloring (hence $\gamma(G) \leq n/3$) and conjectured that if $n$ is sufficiently large, then $\gamma(G) \leq n / 4$. \v{S}pacapan \cite{vspacapan2020domination} in 2020 showed that  every plane triangulation on $n > 6$ vertices has a dominating set of size at most $17n/53$. Very recently in 2024 Christiansen, Rotenberg and Rutschmann \cite{christiansen2024triangulations} showed that  every plane triangulation on $n > 10$ vertices has a dominating set of size at most $2n/7$. As a consequence of  Theorem~\ref{thm:General}, we improve this bound for $n$-vertex plane triangulations which contain a maximal independent set of size either less than $2n/7$ or more than $3n/7$ and verify Matheson-Tarjan conjecture for $n$-vertex plane triangulations with an independent set of size $n/2$.

The novelty in our result is that we find a single $2$-coloring which is both polychromatic and domatic. In fact, it is easy to show that every plane graph $G$ in this class has a domatic $2$-coloring and a polychromatic $2$-coloring. Any proper $2$-coloring of a spanning forest of $G$ will be a domatic $2$-coloring. Existence of a polychromatic $2$-coloring for $G$ is a solved exercise in Lov{\'a}sz's famous book - \emph{Combinatorial Problems and Exercises} since the first edition \cite{lovasz1979combinatorial}. Domatic colorings and polychromatic colorings on their own have been studied extensively in planar graphs.

The monograph by Haynes et al. \cite{haynes2013fundamentals} is a comprehensive reference on Domination.  Reed  \cite{reed1996paths}, Sohn and Yuan \cite{sohn2009domination}, and Xing et al. \cite{xing2006domination} established upper bounds $3n/8$, $4n/11$ and $5n/14$ respectively on domination number for $n$-vertex graphs with minimum degree at least three, four and five. For triangulated discs, the upper bound of $n/3$ by Matheson and Tarjan is tighter than the above general bounds. The domination number of planar graphs have received special attention. MacGillivray and Seyffarth \cite{macgillivray1996domination} established an upper bound of three and ten respectively on the domination number of planar graphs with diameter two and three. Goddard and Henning \cite{goddard2002domination} improved upon this and showed that there is only one planar graph of diameter two with domination number three and also showed that every sufficiently large planar graph of diameter three has domination number at most seven. The dominating set problem on planar graphs is NP-complete. Fomin and Thilikos \cite{fomin2006dominating} showed that the $k$-dominating set problem on planar graphs can be solved in time $O(2^{15.13\sqrt{k}} + n^3)$. Variants of dominating sets like independent dominating sets and total dominating sets in planar triangulations have also been studied. Botler, Fernandes and Guti{\'e}rrez \cite{botler2023independent} proved that every planar triangulation $G$ on $n$ vertices have an independent dominating set of size less than $3n/8$ and this could be improved to $n/3$ if the minimum degree of $G$ is at least five. Claverol et al.~\cite{CLAVEROL2021112179} showed that any near-triangulation of order $n$ has a total dominating set of size at most $2n/5$, with two exceptions. Francis et al.~\cite{francisJGT} showed that every near-triangulation with minimum degree at least three, has two disjoint total dominating sets.

Face hitting sets have an interesting connection with terrain guarding problems. Bose et al. \cite{bose1997guarding} used the idea of polychromatic colorings of triangulations in the context of guarding polyhedral terrains. They showed that there is an infinite family of plane triangulations where the smallest face hitting set has size $\lfloor n/2 \rfloor$.  For a plane graph $G$, let $g(G)$ denote the size of the smallest face in $G$. Alon et al. \cite{noga2009ploychromatic} proved that for any plane graph $G$ with $g(G)$ at least $3$,  $p(G) \geq \left\lfloor\frac{3g-5}{4}\right\rfloor$. They also showed that this bound is nearly tight, as there are plane graphs for which $p(G) \leq \left\lfloor\frac{3g+1}{4}\right\rfloor$. It is easy to observe that for every plane triangulation $G$, $p(G)$ is either $2$ or $3$. As a consequence of Heawood's theorem \cite{heawood1898four}, $p(G)=3$ for a plane triangulation $G$ if and only if $G$ is Eulerian (degree of each vertex is even)  \cite{noga2009ploychromatic}. Hoffmann and Kriegel \cite{hoffmann1996graph} showed that every $2$-connected bipartite plane graph admits a polychromatic $3$-coloring by showing that every such graph can be transformed into an Eulerian triangulation by adding edges only. The decision problem whether a plane graph is polychromatic $k$-colorable is NP-complete when $k \in \{3,4\}$ \cite{noga2009ploychromatic}. Horev and Krakovski \cite{horev2009polychromatic} proved that every plane graph of degree at most $3$, other than $K_4$ and a subdivision of $K_4$ on five vertices, admits a polychromatic $3$-coloring. Horev et al. \cite{horev2012polychromatic} proved that every $2$-connected cubic bipartite plane graph admits a polychromatic $4$-coloring. This result is tight, since any such graph must contain a face of size $4$.

% Every planar graph on $n$ vertices has an independent set of size at least $n/4$ which is a consequence of the Four Color Theorem \cite{AppHakKoc}. Interestingly, the maximum independent sets have received considerably more attention in triangle-free planar graphs than in triangulations. From Gr{\"o}tzsch's theorem that every triangle-free planar graph is $3$-colorable \cite{grotzsch1959dreifarbensatz}, it follows that $n$-vertex triangle-free planar graphs have an independent set of size at least $n/3$. Heckman and Thomas \cite{heckman2006independent} proved that every triangle-free planar graph on $n$ vertices with maximum degree three has an independent set with size at least $3n/8$. Steinberg and Tovey \cite{steinberg1993planar} proved that every triangle-free planar graph has an independent set of size at least $(n+1)/3$ and showed that this lower bound is tight for an infinite families of graphs $\mathcal{G}$. Dvo{\v r}{\'a}k et al., \cite{dvovrak2019triangle} improved the same to  $(n+2)/3$ except for the same infinite families of graphs $\mathcal{G}$. Finding the size of a maximum independent set is NP-complete in planar graphs and same is true even if we restrict to the triangle-free or cubic planar graphs \cite{madhavan1984approximation}.
% Dvo{\v r}{\'a}k  and Mnich \cite{dvorak2017large} proved that if a triangle-free planar graph of order $n$ does not have an independent set larger than $(n + k)/ 3$, then its tree-width is $O (\sqrt k )$. 
 
\subsection{Terminology and notation}
Let $G$ be a graph. The vertex-set and the edge-set of $G$ are denoted respectively by $V(G)$ and $E(G)$. The open neighborhood (resp. closed neighborhood) of a vertex $v$ in graph $G$ is denoted by $N_G(v)$ (resp. $N_G[v]$). The \emph{degree} $d_G(v)$ of a vertex $v$ in $G$ is $|N_G(v)|$. A set of vertices $I$ of a graph $G$ is called an \emph{independent set} if no two vertices in $I$ are adjacent. A graph is \emph{planar} if it can be embedded on the plane in such a way that no two edges cross. A plane graph $G$ is a planar graph together with such an embedding.  We denote the set of vertices lying on the boundary of a face $f$ as $V(f)$. A cycle of length exactly $k$, at least $k$ and at most $k$ in $G$ are respectively termed \emph{$k$-cycle}, \emph{$k^+$-cycle} and \emph{$k^-$-cycle}.

\section{Proof of Theorem~\ref{thm:General}}
In this section, we prove the following theorem, which is a restatement of Theorem~\ref{thm:General} in the language of vertex coloring.

\begin{theorem}                                     \label{thm:coloring}
 % If $G$ is a plane graph with minimum degree at least one, minimum face-length at least three and without self-loops,   
 If $G$ is a plane graph without isolated vertices, self-loops or $2$-faces, then $G$ has a $2$-coloring which is simultaneously domatic and polychromatic.  
\end{theorem}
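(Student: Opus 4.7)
The plan is to prove Theorem~\ref{thm:coloring} by strong induction on $|E(G)|$. Without loss of generality $G$ is connected, since each connected component inherits the three hypotheses and a valid $2$-coloring on each component yields one on the disjoint union. A $2$-coloring $c : V(G) \to \{1,2\}$ is simultaneously domatic and polychromatic iff for every vertex $v$ the closed neighborhood $N_G[v]$ and for every face $f$ the vertex set $V(f)$ each contain both colors. Under the hypotheses, $|N_G[v]| \ge 2$ (since $v$ is not isolated) and $|V(f)| \ge 3$; the latter because a face whose boundary walk visits fewer than three distinct vertices would necessarily involve either a self-loop or a $2$-face.

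A convenient sufficient condition I would aim for is the existence of a spanning subgraph $H \subseteq G$ that is bipartite, has no isolated vertex, and contains an edge on the boundary of every face of $G$. A proper $2$-coloring of $H$ is then both domatic (each vertex of $G$ has an $H$-neighbor of opposite color) and polychromatic (each face of $G$ contains both endpoints of some $H$-edge, hence both colors). My first attempt is to construct such an $H$ directly: start from a spanning tree (which already supplies the no-isolated-vertex condition) and then augment or prune by a small set of edges chosen to meet every face while preserving bipartiteness.

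When direct construction is obstructed, I switch to the inductive step by identifying a local reducible configuration, for instance: a degree-$1$ or degree-$2$ vertex whose removal preserves the three hypotheses; a triangular face sharing an edge with a face of length at least $4$ allowing an edge contraction; or a pair of parallel edges bounding a face of length at least $4$ allowing deletion of one parallel edge. In each case I apply the inductive hypothesis to the smaller plane graph and extend the coloring by forcing the color of the re-inserted vertex from the domatic constraint, then verify the polychromatic constraint on the recreated face(s).

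The main obstacle I anticipate is the \emph{irreducible} case: plane graphs in which no such local reduction preserves all three hypotheses, for example $3$-connected plane triangulations of minimum degree at least $3$. For these a more global argument is probably required; a natural route is to start from a spanning bipartite subgraph obtained via a tree-cotree decomposition of $G$ and iteratively exchange cotree edges into the tree to break remaining monochromatic faces, using planarity to guarantee each exchange can be carried out consistently with both requirements. A careful case analysis is likely to be needed when many short faces share many vertices, because there the margins of both the domatic and polychromatic conditions are tightest and the two constraints can directly conflict.
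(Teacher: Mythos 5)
Your submission is a proof plan rather than a proof, and the plan leaves precisely the hard case open. Your sufficient condition --- a spanning bipartite subgraph $H$ with no isolated vertex containing an edge on the boundary of every face --- is sound, but establishing that such an $H$ exists is essentially equivalent to the theorem itself (given any domatic and polychromatic $2$-coloring of a connected plane graph, the bichromatic edges form such an $H$), so nothing has been gained until you actually construct it. The ``start from a spanning tree and augment/prune'' step is not carried out; the list of reducible configurations is neither shown to be exhaustive nor verified to preserve the hypotheses (for instance, suppressing a degree-$2$ vertex can create a $2$-face, and deleting it can isolate a neighbor); and for the irreducible case you explicitly write that ``a more global argument is probably required'' and that ``a careful case analysis is likely to be needed.'' That irreducible case --- triangulations and, more generally, graphs where every vertex and every face must be handled simultaneously --- is the entire content of the theorem. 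The paper resolves it by a mechanism absent from your proposal: it augments $G$ with dummy chords so that every $3^+$-face spans a triangle and all but at most one vertex lie in a triangle whose two incident edges are true edges (the ``at most one unhappy vertex'' lemma, proved by an exchange argument on the dummy edges), and then invokes the Four Color Theorem; merging the four color classes in pairs forces both merged colors onto every triangle, hence onto every closed neighborhood and every face. There is no indication that your tree--cotree exchange heuristic terminates or avoids trading one monochromatic face for another, and without something of Four-Color-Theorem strength (or the triangle trick) the triangulation case does not follow.

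A secondary but genuine error: the opening reduction ``without loss of generality $G$ is connected'' is not valid for the polychromatic condition, because a face of a disconnected plane graph can be bounded by walks from several components (and the faces of $G$ are not the disjoint union of the faces of its components). Coloring each component separately can leave such a face monochromatic. The paper handles this explicitly at the end of its argument, by flipping the colors of nested components where necessary; your proposal would need an analogous step, as well as a separate treatment of pendant vertices, since the paper's key lemma only applies to connected graphs of minimum degree at least two.
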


The main part of the paper is devoted to proving the next lemma from which Theorem~\ref{thm:coloring} will follow easily. We call a $2$-coloring of a plane graph \emph{$3^+$-polychromatic} if every $3^+$-face  contains vertices of both colors.

\begin{lemma}                                     \label{lem:coloring}
If $G$ is a connected plane graph without self-loops and with minimum degree at least two, then $G$ has a $2$-coloring which is simultaneously domatic and $3^+$-polychromatic. 
%If $G$ is a plane graph with minimum degree at least two and minimum face-length at least three, then $G$ has a $2$-coloring which is simultaneously domatic and polychromatic. 
\end{lemma}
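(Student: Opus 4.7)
The plan is to induct on $|V(G)| + |E(G)|$ and derive a contradiction from a minimum counterexample $G$. The small-graph base cases (two vertices joined by parallel edges, a triangle, $K_4$, and a few similarly small graphs) can be verified by hand, since in each case one of the two essentially different $2$-colorings works.

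First I would reduce to the $2$-connected case. If $v$ is a cut vertex, decompose $G$ along $v$ into two edge-disjoint connected subgraphs $G_1, G_2$ meeting only at $v$. Each $G_i$ may fail the minimum-degree-two hypothesis only at $v$; attach a small auxiliary gadget at $v$ (for instance a pair of parallel edges to a fresh vertex, which creates only a $2$-face) so the hypothesis holds, apply the inductive hypothesis to each piece, and paste the colorings together. Swapping the two color classes in $G_2$ if necessary aligns the color of $v$ on both sides. The domatic condition at any $w \neq v$ is witnessed inside its own piece, and at $v$ it is witnessed on at least one side; every $3^+$-face of $G$ lies entirely in one of $G_1, G_2$, so $3^+$-polychromaticity transfers.

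Next, assuming $G$ is $2$-connected, I would try to eliminate degree-$2$ vertices. If $v$ has degree $2$ with neighbors $u \neq w$, form $G'$ by \emph{suppressing} $v$, that is, identifying the two edges at $v$ into a single edge between $u$ and $w$ (or a new parallel edge if $uw$ already exists). The graph $G'$ is a connected plane graph without self-loops and with minimum degree at least two, so by induction it admits a valid coloring. Color $v$ so that it disagrees with at least one of $u, w$; since $\{u, w\}$ uses at most two colors, such a choice exists and supplies the domatic condition at $v$. One then verifies that the two faces of $G$ incident to $v$ remain $3^+$-polychromatic: both already contained both colors in $G'$ unless the suppression merged two monochromatic arcs on a short face, a borderline situation to be resolved by a local swap.

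The main obstacle I anticipate is the residual case where $G$ is $2$-connected, has no degree-$2$ vertex, and carries no multi-edge on a $3^+$-face. Here I would run a discharging argument based on the Euler identity $\sum_v (d(v)-4) + \sum_f (\ell(f)-4) = -8$ to extract an unavoidable reducible configuration, such as a triangular face whose three vertices are all of bounded degree, or a degree-$3$ vertex incident to two triangles. A local deletion, an application of the inductive hypothesis to the smaller graph, and a re-coloring that uses the freedom afforded by the neighbors of the removed piece should then produce the desired contradiction. Keeping the domatic and $3^+$-polychromatic constraints compatible \emph{simultaneously} under each reduction, rather than merely individually, is what I expect to be the most delicate part of the argument.
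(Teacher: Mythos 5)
Your proposal takes a genuinely different route from the paper, but as written it is a plan with several real gaps rather than a proof. The paper does not induct at all: it augments $G$ with ``dummy'' edges inside faces so that every $3^+$-face and all but at most one vertex lie on a triangle having two original edges, applies the Four Color Theorem to the augmented graph, and merges the four classes into two; each such triangle then forces both merged colors into the relevant closed neighborhood or face boundary. Your inductive scheme would, if completed, be more self-contained (no appeal to the Four Color Theorem), but the completion is exactly what is missing.

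Concretely: (1) In the cut-vertex step, the claim that every $3^+$-face of $G$ lies entirely in one of $G_1$, $G_2$ is false --- a face incident to the cut vertex $v$ can have a boundary walk using edges of both pieces (e.g.\ the outer face of two triangles glued at $v$), so polychromaticity does not transfer for free; it can likely be repaired by noting such a face contains the vertex set of a face of some $G_i$, but that argument is absent. (2) The degree-$2$ suppression is broken as stated: the new edge $uw$ exists in $G'$ but not in $G$, so if $u$'s only oppositely-colored neighbor in the inductive coloring of $G'$ is $w$ (or vice versa), then after un-suppressing, $v$ can rescue at most one of $u,w$ when they received different colors, and the other loses its domatic witness; your text only addresses the domatic condition at $v$ itself, and the face condition is deferred to an unspecified ``local swap.'' (3) Most importantly, the residual case is a placeholder: no unavoidable set is exhibited, no reducible configuration is verified, and --- as you yourself note --- maintaining the domatic and $3^+$-polychromatic constraints simultaneously under a reduction is the crux. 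Since the paper needs the full strength of the Four Color Theorem even after its reductions, there is no reason to expect the discharging endgame to be routine, and without it the argument does not close.
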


To prove Lemma~\ref{lem:coloring} we construct a supergraph $G'$ from $G$ by adding more edges to $G$ without violating planarity. An edge $uw$ is only added to $G$ if it forms a facial triangle $uvw$ where $\{uv,vw\}\subseteq E(G)$.  We call the edges in $E(G)$ as \emph{true edges} and those in $E(G') \setminus E(G)$ as \emph{dummy edges}.  Vertices $v$ and $u$ are called \emph{true neighbors} in $G'$ if $vu$ is a true edge. We call a vertex $v$ \emph{happy} if there exists a triangle $uvw$ in $G'$ where $uv$ and $vw$ are true edges. A face $f$ in $G$ with vertices $V(f)$ is \emph{happy} if the subgraph of $G'$ induced on $V(f)$ contains a triangle. A \emph{true angle at $v$} in $G'$ is an angle between two cyclically consecutive true edges incident on $v$ which connects $v$ to two distinct vertices (ignoring any dummy edges between them). Let $\mathcal{G}$ be the family of all plane multigraphs with maximum number of happy vertices and happy faces that can be obtained by adding dummy edges to $G$. Let $G'$ be a graph in $\mathcal{G}$ with the smallest numbers of dummy edges. We make the following observations about $G'$.

\begin{observation}
\label{obs:0}
Every $3^+$-face of $G$ is happy in $G'$.
\end{observation}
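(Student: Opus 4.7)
The plan is to argue by contradiction using the maximality of $G'$ in $\mathcal{G}$. Suppose some $3^+$-face $f$ of $G$ is not happy in $G'$. First, I would dispose of the case when $f$ is a $3$-face: since $G$ has no self-loops, the boundary walk of a $3$-face visits three distinct vertices $u,v,w$ joined pairwise by edges of $G$, so $G[V(f)]\subseteq G'[V(f)]$ already contains a triangle. Therefore it suffices to treat the case when $f$ has length $k\geq 4$.

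Next I would extract two consequences of the assumption that $G'[V(f)]$ is triangle-free. (i) No dummy edge of $G'$ can lie inside the region of $f$: any such edge $uw$, by the construction rule, completes a facial triangle $uvw$ with $uv,vw\in E(G)$ whose three vertices all belong to $V(f)$, producing a triangle in $G'[V(f)]$. In particular, $f$ is still a face of $G'$ with the same boundary walk as in $G$, and at every boundary vertex $v$ the cyclic order of edges on the $f$-side is unchanged from $G$. (ii) For every three consecutive vertices $u,v,w$ on the boundary walk of $f$ with $u\neq w$, the edge $uw$ is absent from $G'$, since otherwise $uvw$ would be a triangle in $G'[V(f)]$.

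I would then argue that some consecutive triple $u,v,w$ on the boundary walk of $f$ has $u\neq w$. Otherwise the walk alternates between two vertices $x,y$, giving $V(f)=\{x,y\}$ and a boundary consisting entirely of edges between $x$ and $y$. A direct planarity argument then shows $f$ must be a digon: if there is a single edge between $x$ and $y$, the boundary can traverse it at most twice, forcing length $\leq 2$; if $x$ and $y$ are joined by $m\geq 2$ parallel edges, these split the neighbourhood of the bundle into $m$ slots each bounded by two consecutive parallel edges, and a face supported on just $\{x,y\}$ must occupy an undivided slot, hence be a digon. Either conclusion contradicts $k\geq 4$, and this alternating-walk step is the main technical obstacle of the proof.

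Finally, with a triple $u,v,w$ satisfying $u\neq w$, consequence (ii) gives $uw\notin E(G')$, and by (i) the true edges $uv$ and $vw$ remain cyclically consecutive at $v$ on the $f$-side in $G'$. Hence drawing $uw$ inside the region of $f$ near $v$ is a legitimate dummy-edge addition that creates the facial triangle $uvw$. In the resulting plane multigraph $G''$ the triangle $uvw$ lies inside $G''[V(f)]$, so $f$ is happy in $G''$. Since adding an edge never destroys a triangle, every vertex and face happy in $G'$ remains happy in $G''$, while $f$ is newly happy; thus $G''$ has strictly more happy faces than $G'$, contradicting the choice of $G'$ as a member of $\mathcal{G}$.
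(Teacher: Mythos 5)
Your proof is correct and follows essentially the same route as the paper: assume $f$ is unhappy, locate three consecutive boundary vertices $u,v,w$ with $u\neq w$, and add the dummy edge $uw$ inside $f$ to contradict the maximality of happy faces in the choice of $G'$. You additionally spell out details the paper only asserts (that a loopless connected plane multigraph has no $3^+$-face supported on two vertices, and that an unhappy face contains no dummy edges in its interior), which is sound elaboration rather than a different argument.
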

\begin{proof}
Suppose $f$ is a $3^+$-face of $G$ which is unhappy in $G'$. Since $G$ is connected, the boundary $B$ of $f$ is a single closed walk. Further since $G$ does not have self loops and $f$ has length at least three, $V(f)$ has at least three vertices. Hence we can make $f$ happy by adding a dummy edge inside $f$ between two distinct vertices in $V(f)$ which are at distance two along the walk $B$. The resulting graph is planar supergraph of $G$ and has more happy faces than $G'$. This violates the membership of $G'$ in $\mathcal G$.
\end{proof}

\begin{remark}
Notice that both the assumptions on $G$ are necessary for Observation~\ref{obs:0}. Recall that if we allow self-loops, we can have $3^+$-faces bounded by a single vertex, and such faces cannot be made happy. If we allow disconnected graphs, then a $4$-face bounded between two $2$-cycles cannot be made happy by adding a single dummy edge. 
\end{remark}

Unlike $3^+$-faces, we cannot guarantee that all the vertices are happy in $G'$. The next observation, even though technical, illustrates precisely what happens in the neighborhood of an unhappy vertex.

\begin{observation}                                 \label{obs:1}
  If $v$ is an unhappy vertex in $G'$, then 
  \begin{enumerate}
  \item there is exactly one dummy edge incident on $v$ through every true angle at $v$, and
  \item each of these dummy edges makes exactly one true neighbor
  of $v$ happy.
  \end{enumerate}
\end{observation}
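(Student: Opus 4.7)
The plan is to exploit the two extremal choices defining $G'$: maximality of the joint happy-vertex and happy-face counts, and minimality of the number of dummy edges among such maximizers. For each assertion, I would derive a contradiction whenever the conclusion fails by exhibiting a local modification of $G'$ that either strictly increases some happy count or preserves both happy counts while reducing the number of dummy edges.

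Fix a true angle at $v$ between consecutive true edges $va, vb$ with $a \neq b$, and let $vc_1, \ldots, vc_k$ be the dummy edges through this angle in cyclic order at $v$. A cyclic-order analysis forces $k \le 2$: the facial triangle $vy_ic_i$ that witnessed the insertion of the dummy $vc_i$ is a face of $G'$, so $vy_i$ and $vc_i$ must be cyclically adjacent at $v$; since $y_i$ is a true neighbor of $v$, the edge $vy_i$ is true, but the only true edges inside the angle are $va$ and $vb$, so $vy_i \in \{va, vb\}$. This forces $y_1 = a$ and $y_k = b$ and leaves no room for any middle index $1 < i < k$. To exclude $k = 0$, note that otherwise $va, vb$ bound a face $F'$ of $G'$ of length at least three (using $a \neq b$ and the absence of self-loops), so the chord $ab$ is a legal dummy insertion creating the facial triangle $vab$ and making $v$ happy; since happiness is monotone under edge additions, this strictly increases the happy-vertex count, contradicting $G' \in \mathcal{G}$.

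The heart of the proof is ruling out $k = 2$ for part (1) and, for part (2), ruling out the symmetric bowtie configuration in which the unique dummy $vc$ produced by part (1) has $vac$ and $vbc$ both as facial triangles with all of $va, ac, vb, bc$ true. When $k = 2$ with $c_1 = c_2$, the face between the two parallel dummies is a digon, and deleting one dummy reroutes the triangle $vbc_2$ through the surviving parallel edge, preserving every happy vertex and happy face and contradicting dummy-edge minimality. When $k = 2$ with $c_1 \neq c_2$, or in the bowtie case, the natural modification is a diagonal flip: remove the obstructing dummies and install the chord $ab$ across the merged face. This makes $v$ happy (and, in the bowtie case, also $c$) but may destroy the specific triangles making $a$ and $b$ happy; completing the contradiction requires bookkeeping which faces of $G$ contain the relevant triples from $\{v, a, b, c_1, c_2\}$ and which triangles in the induced subgraph on each such face survive the flip. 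This face-counting step is the principal obstacle I anticipate, because happy faces are indexed by the fixed faces of $G$ rather than by faces of $G'$, so the comparison is genuinely combinatorial and must use the extremality of $G'$ in a non-local way.
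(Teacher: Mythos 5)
There is a genuine gap, and you have correctly located it yourself: the cases you defer (``ruling out $k=2$'' and the bowtie for part~(2)) are the entire content of the observation, and the local modification you propose cannot close them. Your diagonal flip (delete $vc_1, vc_2$, insert $ab$) gains one happy vertex ($v$) but may lose two ($a$ and $b$, each of whose only witnessing triangle may use the deleted dummy), plus possibly a happy face, so extremality of $G'$ yields no contradiction from this move alone; no purely local surgery inside one true angle can work, because the ``budget'' that forbids a second dummy edge in one angle lives in the \emph{other} angles at $v$. You also leave part~(2) half-addressed: you must rule out a dummy edge that makes \emph{no} true neighbor of $v$ happy, not only one that makes two. (Your $k\le 2$ reduction additionally assumes the witnessing facial triangle of each dummy edge survives as a face of $G'$, which needs a sentence of justification.)

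The paper avoids all of this with a global count at $v$ rather than a per-angle analysis. Let $D_v$ be the dummy edges at $v$ and $H_v$ the happy true neighbors of $v$. Your $k\ge 1$ argument (identical to the paper's) gives $|D_v| \ge \#\{\text{true angles at } v\} \ge d_G(v) \ge |H_v|$. For the reverse inequality one observes that any triangle witnessing the happiness of a vertex through an edge of $D_v$ must be of the form $uvc$ with $u$ a true neighbor of $v$, so each member of $H_v$ needs at most one edge of $D_v$; hence if $|D_v|>|H_v|$, or if some edge of $D_v$ serves two neighbors, some edge of $D_v$ is redundant, and deleting it (re-inserting a chord of the freed angle if needed to preserve the happiness of $v$'s face and of $H_v$) contradicts either the minimality of the number of dummy edges or the maximality of $G'$ in $\mathcal{G}$. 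The resulting sandwich $|D_v| = |H_v| = d_G(v) = \#\{\text{true angles}\}$ delivers both conclusions simultaneously: exactly one dummy per true angle, and a bijection between dummy edges and the happy neighbors they create. If you want to salvage your route, you would need to aggregate your per-angle deficits over all angles at $v$ in essentially this way, at which point you have reproduced the counting argument.
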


\begin{proof}
     Let $D_v$ denote the set of dummy edges incident to $v$ in $G'$ and $H_v$ denote the set of happy true neighbors of $v$. If there are no dummy edges incident on $v$ through a true angle $uvw$, we can add a dummy edge $uw$ without violating planarity, making $v$ happy. This will increase the number of happy vertices contradicting the membership of $G'$ in $\mathcal G$. Since the number of true angles at $v$ is at least $|N_G(v)| = d_G(v)$, we have $|D_v| \geq d_G(v)$. Further, since $|H_v| \leq d_G(v)$, we have $|D_v| \geq |H_v|$.

     Every vertex in $H_v$ needs at most one edge from $D_v$ to become happy. No vertex outside $H_v$ can be made happy by an edge in $D_v$. Hence, either if $|D_v| > |H_v|$ or if $|D_v| = |H_v|$ and one of the edges in $D_v$ is making two true neighbors of $v$ happy, then at least one edge in $D_v$ is redundant. That is, we can remove this edge and still leave all the vertices in $H_v$ happy. If this deletion does not leave a true angle at $v$ without a dummy edge, then the happiness of the corresponding face is also intact. This contradicts the choice of $G'$ as a smallest member in $\mathcal G$. On the other hand, if this deletion leaves a true angle at $v$ without a dummy edge, we can make $v$ happy as we did in the first case and restore the happiness of the corresponding face. This contradicts the membership of $G'$ in $\mathcal G$. Hence $|D_v| = |H_v|$ and each edge in $D_v$ makes exactly one vertex in $H_v$ happy. Since $d_G(v)$ is sandwiched between $|H_v|$ and $|D_v|$, we also have $|H_v| =|D_v| = d_G(v)$.
\end{proof}

The next observation is an easy restatement of the equality $|H_v| = d_G(v)$ that we established in Observation~\ref{obs:1}
\begin{observation}\label{obs:2}
    Two unhappy vertices cannot be true neighbors in $G'$.
\end{observation}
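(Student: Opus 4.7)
The plan is to derive this directly from the equality $|H_v| = d_G(v)$ established at the end of Observation~\ref{obs:1}. Recall that $d_G(v)$ counts the number of distinct true neighbors of $v$ in $G$, while $H_v$ is defined as the set of \emph{happy} true neighbors of $v$ in $G'$. Since $H_v$ is a subset of the full set of true neighbors of $v$ and these two sets have the same cardinality, they must in fact coincide. In other words, whenever $v$ is unhappy, every true neighbor of $v$ is happy.

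With that rephrasing in hand, the proof of the observation is a one-line contradiction. Suppose $u$ and $v$ are both unhappy and $uv$ is a true edge of $G'$. Applying the conclusion of the previous paragraph to $v$, its true neighbor $u$ must be happy, contradicting the assumption that $u$ is unhappy.

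I do not anticipate any obstacle here: the work has already been done inside Observation~\ref{obs:1}, where the membership of $G'$ in $\mathcal{G}$ and its minimality of dummy edges were used to pin $|H_v|$ to $d_G(v)$. The only thing worth emphasizing in the write-up is why $H_v$ being a set of size $d_G(v)$ of true neighbors of $v$ forces $H_v$ to equal the entire true neighborhood of $v$, namely that $v$ has exactly $d_G(v)$ true neighbors by definition of degree.
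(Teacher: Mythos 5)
Your proposal is correct and matches the paper's own treatment: the paper presents Observation~\ref{obs:2} as an ``easy restatement'' of the equality $|H_v| = d_G(v)$ from Observation~\ref{obs:1}, which is precisely the inclusion-plus-cardinality argument you spell out. No issues.
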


% \begin{observation}\label{obs:3}
%      No dummy edges are added between two separate connected components of $G$. 
%  \end{observation}
%  \begin{proof}
%      A dummy edge is added only between two vertices which share a true common neighbor, hence they cannot be from two separate components in $G$.
%  \end{proof}
 \begin{observation}[Key Observation]\label{obs:4}
 % Each connected component of $G'$ has at most one unhappy vertex.
     $G'$ has at most one unhappy vertex.
 \end{observation}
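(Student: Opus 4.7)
The plan is to suppose for contradiction that $G'$ has two distinct unhappy vertices $u$ and $v$, and to derive a contradiction by iteratively ``shifting'' the unhappiness at $u$ along a shortest true path toward $v$. By Observation~\ref{obs:2}, $u$ and $v$ are not true neighbors, so by the connectedness of $G$ there is a shortest true path $u = y_0, y_1, \ldots, y_m = v$ with $m \geq 2$; in particular $y_{m-1}$ is a true neighbor of $v$ while no earlier $y_i$ is.

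The core of the argument is a local shift operation. Suppose $G^*$ is a minimum-dummy-edge member of $\mathcal{G}$ in which $a$ is unhappy and $b$ is a true neighbor of $a$. Observation~\ref{obs:1} supplies a unique dummy edge $az$ at $a$ whose facial triangle $abz$ (with $ab$ and $bz$ both true) witnesses $b$'s happiness; moreover the minimality and maximality defining $\mathcal{G}$ force $az$ to be the sole witness of $b$'s happiness, for otherwise $az$ would be redundant exactly as in the proof of Observation~\ref{obs:1}. Letting $b'$ be the other endpoint of the true angle at $a$ containing $az$, I set $G^{**} = G^* - az + bb'$. The triangle $abb'$ in $G^{**}$ makes $a$ happy, and the destruction of $abz$ makes $b$ unhappy. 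No other vertex loses happiness (again by the sole-witness property), and no unhappy vertex can be newly made happy by the added edge $bb'$, since any such gain would contradict the maximality of happy vertices in $G^*$. Dummy-edge counts and happy-face counts are preserved (the face enclosing the affected angle still hosts a triangle, now $abb'$), so $G^{**}$ is again a minimum-dummy-edge member of $\mathcal{G}$, with the happiness of $a$ and $b$ exchanged and all else intact.

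Applied iteratively with $(a, b) = (y_{k-1}, y_k)$ for $k = 1, \ldots, m-1$, this produces a sequence $G' = G^{(0)}, G^{(1)}, \ldots, G^{(m-1)}$ of minimum-dummy-edge members of $\mathcal{G}$ in which $y_k$ is the freshly unhappy vertex of $G^{(k)}$. The delicate point is that no shift may disturb the unhappiness of $v$: the only ways the $k$-th shift could interact with $v$ are (i) if the removed edge $y_{k-1}z$ is actually the dummy edge $y_{k-1}v$, or (ii) if the added edge $y_kb'$ completes a new triangle of true edges at $v$. In case (i), if $v$ stays unhappy then $v$ has only $d_G(v) - 1$ dummy edges in $G^{(k)}$, contradicting Observation~\ref{obs:1}; if $v$ instead becomes happy, the total happy-vertex count strictly increases, contradicting membership in $\mathcal{G}$. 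Case (ii) gives the same maximality contradiction. Hence each shift either goes through cleanly or already yields a contradiction.

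After $m-1$ clean shifts, we obtain a minimum-dummy-edge $G^{(m-1)} \in \mathcal{G}$ in which $y_{m-1}$ is unhappy, $v$ remains unhappy, and $y_{m-1}v$ is a true edge, contradicting Observation~\ref{obs:2} applied to $G^{(m-1)}$. The main obstacle I expect is verifying rigorously that the shift preserves the happy-face count and that the sole-witness property of $az$ is genuinely forced by the minimality assumptions; once this local swap lemma is nailed down, the induction along the shortest path is essentially bookkeeping.
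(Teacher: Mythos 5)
Your proposal is correct and follows essentially the same route as the paper: shift the unhappiness along a path between the two unhappy vertices by deleting the dummy edge through a true angle and replacing it with the chord of that angle, preserving membership in $\mathcal{G}$ and the minimum dummy-edge count, until the two unhappy vertices are true neighbors, contradicting Observation~\ref{obs:2}. Your additional case analysis about the shift disturbing $v$ is handled in the paper implicitly by the maximality of $\mathcal{G}$ (any net gain in happy vertices is impossible), but it is the same argument.
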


 \begin{proof}
Let $v$ be an unhappy vertex. From Observation~\ref{obs:1}, it is clear that if we remove one dummy edge $e$ incident on $v$, the only effect on happiness is that one happy true neighbor of $v$, say $u$, becomes unhappy and the face $f$ in $G$ containing $e$ may become unhappy. Moreover, one true angle at $v$, say $uvw$ becomes free. We can now add the dummy edge $uw$ inside $f$ to make $v$ and $f$ happy. So, the unhappiness of a vertex $v$ can be shifted to any one of its true neighbors without creating any other unhappy vertices or faces. Note that this shifting does not increase the total number of dummy edges. 

Suppose there were two unhappy vertices, say $v, v'$ in $G'$. Since $G$ is connected, there is a path $P = (v = x_0, \ldots, x_k = v')$ in $G$. Then we can do the above shifting of unhappiness repeatedly from $x_i$ to $x_{i+1}$ for $0 \leq i \leq k-2$. This shifting will end when $x_{k-1}$ and $v'$ are unhappy. This new graph also qualifies to be $G'$ and it contradicts Observation~\ref{obs:2}. Hence, there can be at most one unhappy vertex in $G'$.  
\end{proof}

By the four color theorem \cite{AppHak,AppHakKoc}, there exists a proper $4$-coloring $\phi : V[G'] \to \{1,2,3,4\}$ of $G'$. By Observation~\ref{obs:4}, we can assume without loss of generality that %each
the unhappy vertex in $G'$ gets color $1$ and has a true neighbor of color $2$. Obtain a $2$-coloring $\psi$ of $G'$ by merging the color classes $1$ and $3$ to a single color class and $2$ and $4$ to a different color class. This ensures that, in $\psi$, the unhappy vertex sees both colors in its closed neighborhood $N_G[v]$. Every happy vertex $v$ is part of a triangle $uvw$ in $G'$ with $uv, vw$ being true edges. Since $u, v, w$ get three different colors in $\phi$, $N_G[v] \supseteq \{u, v, w\}$ will contain vertices of both colors in $\psi$.
By Observation~\ref{obs:0}, every $3^+$-face in $G$ got at least one triangle among its boundary vertices in $G'$. Hence every $3^+$-face of $G$ also sees both the colors in $\psi$. Hence $\psi$ is a domatic as well as a $3^+$-polychromatic $2$-coloring of $G$. This completes the proof of Lemma~\ref{lem:coloring}.

To prove Theorem~\ref{thm:coloring}, we need to extend Lemma~\ref{lem:coloring} to handle graphs with multiple components and  pendant vertices. Let $G$ be a plane graph without isolated vertices, self-loops or $2$-faces. Trim the graph by recursively removing vertices of degree one to obtain a subgraph $G'$ with no degree one vertex in it. Since we are only degree one vertices at every stage, we do not create any new components. $G'$ can have only two types of components - components with minimum degree at least two or isolated vertices (trivial components). 

Consider a nontrivial component $H$ of $G'$. Given a (partial) $2$-coloring of the vertices of $G$, a vertex $v$ (resp. face $f$) is said to be \emph{satisfied}, if $N_G[v]$ (resp. $V(f)$) contains at least one vertex of each color. By Lemma~\ref{lem:coloring} we know that there exists a $2$-coloring of $H$ satisfying every $3^+$-face and vertex of $H$. Notice that $H$ may have $2$-faces even though $G$ does not.  

We start with such a coloring for every nontrivial component in $G'$ and color the isolated vertices in $G'$ with any of the two colors. We then add back the deleted vertices one at a time in the reverse order as we deleted them. Every time we add back a vertex, we color it with the color different from that of its parent. This makes sure that every vertex added back and its parent are satisfied. Since there are no isolated vertices in $G$, every vertex of $G$ is satisfied in the resulting coloring of $G$. 

Now consider the faces. Since we are adding back degree one vertices to $G'$ recursively to obtain $G$, we are not creating any new faces even though some of the faces may have a larger boundary due to the new edges added inside them.  Note that the vertices which were on the boundary of any face in $G'$ still remains in its boundary in $G$. So every face which was satisfied in $G'$ is satisfied in this coloring of $G$. Every face in $G'$ whose boundary contains a closed walk of length at least three is satisfied in $G'$ since that closed walk is the boundary of a $3^+$-face in one of the nontrivial components of $G'$. Let $f$ be an unsatisfied face in $G'$. By the previous observation, $f$ is bounded by a collection consisting of only $2$-cycles and isolated vertices. Since $G$ does not have any $2^-$-faces, $f$ will either contain a pendant vertex of $G$ inside it or $V(f)$ contains vertices from more than one component of $G$. In the first case, we colored this pendant vertex differently from its parent and hence $f$ will be satisfied. In the second case, we can flip the colors of one of the interior components and all the components nested inside it, if needed, to ensure that $f$ is satisfied. 
% Note that since we are flipping the colors of all components nested inside we will not create any new unsatisfied face after the flipping. 
This completes the proof of Theorem~\ref{thm:coloring}.

% the $3^-$-faces. and the faces which only saw isolated vertices, self loops or $2$-cycles in its boundary. Consider any $3^-$-face $f'$ in $G'$. Since, there were only $3^+$-faces in $G$ and no new faces were created in $G'$, there exist a corresponding $3^+$-face $f$ in $G$. Since $f$ is a $3^+$-face, at least one vertex $v$ was deleted from its boundary to obtain $f'$. While adding back $v$ we colored it with a different from its parent $u$. $u$ and $v$ are on the boundary of $f$ and hence it is satisfied. If   Hence every face of $G$ also gets satisfied in this coloring. 

\section{Application to Matheson-Tarjan Conjecture}

As a corollary of Theorem~\ref{thm:General} we can give a conditional upper bound on the domination number of plane triangulations.

\begin{corollary}
\label{cor:MathTarj}
Every $n$-vertex (simple) plane triangulation $G$ with an independent set of size at least $\alpha n$ has $\gamma(G) \leq (1 - \alpha)n/2$.
\end{corollary}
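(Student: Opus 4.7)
The plan is to reduce to Theorem~\ref{thm:General} by deleting an independent set. Fix an independent set $I \subseteq V(G)$ with $|I| \geq \alpha n$ and let $G' := G - I$, so that $|V(G')| \leq (1-\alpha)n$. I will apply Theorem~\ref{thm:General} to $G'$ to obtain disjoint $V_1, V_2 \subseteq V(G')$ that are simultaneously dominating and face-hitting in $G'$, and then argue that each $V_i$ is already a dominating set of the full graph $G$. Since $V_1, V_2$ are disjoint subsets of $V(G')$, this will yield $\gamma(G) \leq \min(|V_1|, |V_2|) \leq |V(G')|/2 \leq (1-\alpha)n/2$.

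To invoke Theorem~\ref{thm:General}, I first verify that $G'$ has no self-loops, no isolated vertices and no $2$-faces (the case $n \leq 3$ is checked by hand). Self-loops are absent as $G$ is simple. For isolated vertices: if some $v \in V(G')$ had all its $G$-neighbors in $I$, then since $G$ is a plane triangulation, the link of $v$ is a cycle whose consecutive vertices are adjacent in $G$, which would force $I$ to contain two adjacent vertices. For the face structure I will use the fact that every triangular face of $G$ has pairwise adjacent vertices and so contains at most one vertex of $I$; consequently the faces of $G'$ are exactly the triangular faces of $G$ disjoint from $I$, together with, for each $u \in I$, one new face bounded precisely by the link cycle $N_G(u)$. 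Every such face has length at least $3$.

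Having applied Theorem~\ref{thm:General} to $G'$, the crux is the face-hitting-to-domination transfer. For $x \in V(G) \setminus V_1$, either $x \in V(G')$, in which case $V_1$ dominates $x$ by the theorem, or $x \in I$, in which case the face of $G'$ bounded by $N_G(x)$ must meet $V_1$ by the face-hitting property, so $x$ has a $G$-neighbor in $V_1$. Thus $V_1$ dominates $G$, and symmetrically so does $V_2$. The main obstacle I anticipate is pinning down that each $u \in I$ contributes a face bounded exactly by $N_G(u)$ with no merging with other structure; this rests on the $3$-connectivity of plane triangulations with $n \geq 4$ vertices, which ensures that every link is a simple cycle, and on the fact that non-adjacent vertices of a triangulation share no common face, so the new faces created at different vertices of $I$ do not interact.
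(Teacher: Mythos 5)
Your proposal is correct and follows essentially the same route as the paper: delete the independent set $I$, verify that $G' = G - I$ satisfies the hypotheses of Theorem~\ref{thm:General} (no self-loops, no isolated vertices, no $2$-faces, with each $u \in I$ leaving a face bounded by $N_G(u)$), take the smaller of $V_1, V_2$, and use the face-hitting property to dominate $I$. The extra care you take with $3$-connectivity and the exact face structure of $G'$ is sound but not needed beyond what the paper's shorter argument already establishes.
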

 \begin{proof}
    Consider a plane triangulation $G$ on $n$ vertices with an independent set $I$ of size at least $\alpha n$. We delete $I$ from $G$ to get a plane graph $G'$. Note that since $G$ is a triangulation and every vertex is part of a triangle, removing $I$ from $G$ will not create any isolated vertices in $G'$. Moreover, $G$ has no $2$-faces and since $G$ is a simple triangulation, every vertex removed from $G$ has degree more than two. Hence $G'$ has no $2$-faces. Corresponding to every deleted vertex $v$, a unique face $f_v$ is formed in $G'$ where $V(f_v)=N_G(v)$. Note that $|V(G')| \leq (1-\alpha)n$. By Theorem~\ref{thm:General} we know that there is a face-hitting dominating set $S$ in $G'$ with $|S| \leq (1-\alpha)n/2$. 
    Since $S$ is a dominating set in $G'$ and $G'$ is a subgraph of $G$, $S$ dominates all the vertices of $V(G') \setminus S = V(G) \setminus (I \cup S)$ in $G$ as well. Since $S$ is a face-hitting set in $G'$, it dominates every vertex of $I$ in $G$. Hence $S$ is a dominating set of $G$.
 \end{proof} 

Corollary~\ref{cor:MathTarj} proves Matheson-Tarjan Conjecture for plane triangulations which have an independent set of size at least $n/2$. Recall that Christiansen et al. \cite{christiansen2024triangulations} showed that  every plane triangulation on $n > 10$ vertices has a dominating set of size at most $2n/7$. Corollary~\ref{cor:MathTarj} improves this bound when $\alpha > 3/7$. This includes triangulations obtained by adding a new vertex inside every face of a planar graph whose average face length is slightly below $14n/3$
(and connecting it to the vertices on this face). Since a maximal independent set in a graph is a dominating set, our bound improves Christiansen et al.'s bound when $G$ contains a maximal independent set of size either less than $2n/7$ or more than $3n/7$. 

% ---- Bibliography ----
%
% BibTeX users should specify bibliography style 'splncs04'.
% References will then be sorted and formatted in the correct style.
%
\bibliographystyle{splncs04}
\bibliography{bibtex}
\end{document}